\documentclass[a4paper,draft,reqno,12pt]{amsart}
\usepackage[english]{babel}
\usepackage{amsmath}
\usepackage{amssymb}
\usepackage{amscd}
\usepackage{amsthm}
\usepackage{euscript}
\usepackage{tikz}
\usepackage{pst-node}
\usepackage{tikz-cd} 
\newtheorem{proposition}{Proposition}
\newtheorem{lemma}{Lemma}
\newtheorem{theorem}{Theorem}

\theoremstyle{definition}
\newtheorem{definition}{Definition}

\theoremstyle{remark}
\newtheorem {remark}{Remark}

\def\Ker{{\rm Ker}\,}

\def\BK{{\mathbb K}}

\def\BK{{\mathbb K}}

\def\BN{{\mathbb N}}

\def\BA{{\mathbb A}}

\def\CO{\mathcal{O}}

\def\CI{\mathcal{I}}

\def\mf{\mathfrak{m}}

\sloppy
\textwidth=16.3cm
\oddsidemargin=0cm
\topmargin=0cm
\headheight=0cm
\headsep=1cm
\textheight=23.5cm
\evensidemargin=0cm

\title{On the generators of coordinate algebras of affine ind-varieties}

\author{Alexander Chernov}
\email{chenov2004@gmail.com}
\address{
Lomonosov Moscow State University, Faculty of Mechanics and Mathematics, Department of Higher Algebra, Leninskie Gory 1, Moscow, 119991 Russia;
\linebreak
and
\linebreak
HSE University, Faculty of Computer Science, Pokrovsky Boulevard 11, Moscow, 109028, Russia}
\begin{document}
\maketitle

\thispagestyle{empty}
\begin{abstract}
In this paper we study the structure of the coordinate ring of an affine ind-variety. We prove that any coordinate ring of an affine ind-variety which is not isomorphic to an affine algebraic variety doesn't have a countable set of generators. Also we prove that coordinate rings of affine ind-varieties have an everywhere dense subspace of countable dimension.
\end{abstract}

\section{Introduction}

Let $\BK$ be an algebraically closed field of characteristic zero. By an algebraic variety we always mean an algebraic variety over $\BK$. We will denote a countable product $\prod_{i = 1}^{\infty}\BK$ with a coordinate-wise multiplication by $\BK^{\infty}$.

\begin{definition}\cite[Definition 1.1.1]{FK}
    An \emph{ind-variety} $X$ is a set together with an ascending filtration $X_0 \subseteq X_1 \subseteq X_2 \subseteq \dotsc \subseteq X$ such that the following holds: \begin{enumerate}
        \item $X = \cup_{k\in \BN}X_k$;
        \item Each $X_k$ has the structure of an algebraic variety;
        \item For all $k \in \BN$, the inclusion $X_k \subseteq X_{k+1}$ is a closed immersion of algebraic varieties.
    \end{enumerate}
    An ind-variety is called \emph{affine ind-variety} if it admits a filtration such that all $X_k$ are affine. A \emph{morphism} between ind-varieties $X = \cup X_i$ and $Y = \cup Y_j$ is a map $\varphi: X \to Y$ such that for any $k$ there is an $l(k)$ such that $\varphi(X_k) \subseteq Y_l$ and the induced map $\varphi_{k}: X_k \to Y_{l(k)}$ is a morphism of varieties. An \emph{isomorphism} $\varphi$ of ind-varieties is a bijective morphism such that $\varphi^{-1}$ is also a morphism of ind-varieties.
\end{definition}

For an ind-variety $X = \cup X_i$ there is a projective system of coordinate algebras of $X_i$'s with obvious surjective projections $\pi_{km}:\CO(X_k)\to \CO(X_m)$ for $k \geq m$ arising from the inclusions $X_m \hookrightarrow X_k$. For a morphism of ind-varieties $\varphi: X \to Y$ there is a commutative diagram:

\[ \begin{tikzcd}[sep=large]
    \CO(X_m)  & \CO(X_k) \arrow[two heads]{l} \\%
    \CO(Y_{l(m)}) \arrow{u} & \CO(Y_{l(k)}) \arrow{u} \arrow[two heads]{l} 
    \end{tikzcd}
    \]

which yields a continuous homomorphism $\psi:\varprojlim\mathcal{O}(Y_l) \to \varprojlim\mathcal{O}(X_k)$ of topological algebras since both algebras have a standard topology of the inverse limit and $\psi$ is an inverse limit of mappings $\CO(Y_{l(k)}) \to \CO(X_k)$.

\begin{definition}\cite[Definition 1.1.4 (3)]{FK}
    The \emph{algebra of regular functions} on an affine ind-variety $X = \cup_{k\in\BN}X_k$ or the \emph{coordinate ring} is defined as \begin{equation*}
        \mathcal{O}(X) = \varprojlim\mathcal{O}(X_k)
    \end{equation*}
    The algebra $\mathcal{O}(X)$ is a topological algebra and it defines $X$ up to isomorphism. A topological algebra which is isomorphic to the coordinate ring of some affine ind-variety is called an \emph{affine ind-algebra}.
\end{definition}

The systematic study of ind-varieties and ind-groups was started by Shafarevich in \cite{SH1, SH2}. Later, this theory was further advanced by Kambayashi in addressing the Jacobian problem [4] and developing the framework for ind-affine schemes [5]. More recently, Furter and Kraft's work [1] provides a comprehensive overview of the contemporary study of ind-varieties and ind-groups.

Common algebraic varieties are also ind-varieties since they can be represented by a finite filtration. We introduce the following definition to separate this case.

\begin{definition}\label{strictdef}
    An ind-variety $X$ is called a \emph{strict ind-variety} if it is not isomorphic to a common algebraic variety i.e. it doesn't have a finite filtration by affine algebraic varieties.
\end{definition}

In our paper we prove that for any strict affine ind-variety its algebra of regular functions does not have a countable set of generators (Theorem~\ref{notcount}). But we show that any such algebra has a countable \emph{local basis} (see Definition~\ref{lb} and Theorem~\ref{countbasis}).

\section{Cardinality of the set of generators of an affine ind-algebra}

In the case of affine algebraic varieties their coordinate algebra is finitely generated and noetherian. The structure of the coordinate algebra of an affine ind-variety differs from the classical case.

\begin{lemma}\label{points}
Let $\varphi: X \to Y$ be a closed immersion of affine varieties and $\varphi^{*}: \mathcal{O}(Y) \to \mathcal{O}(X)$ is the corresponding homomorphism of algebras of regular functions. If $p \in Y \setminus X$ then $\varphi^{*}(\mf_{p}) = \mathcal{O}(X)$, where $\mf_{p}$ denotes the maximal ideal corresponding to the point $p$.
\end{lemma}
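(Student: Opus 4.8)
The plan is to exploit the two defining features of a closed immersion: that $\varphi^{*}$ is surjective, and that its kernel is the vanishing ideal $I(X) \subseteq \mathcal{O}(Y)$ of the closed subvariety $X$. The strategy is to produce a single element of $\mf_{p}$ whose image under $\varphi^{*}$ is the unit $1 \in \mathcal{O}(X)$; once the image contains a unit, it must be the whole ring, and the statement follows.

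First I would record the reduction that makes this work: because $\varphi$ is a closed immersion, $\varphi^{*}$ is a surjective ring homomorphism, and hence the image $\varphi^{*}(\mf_{p})$ of the ideal $\mf_{p}$ is again an ideal of $\mathcal{O}(X)$. Indeed, any $g \in \mathcal{O}(X)$ can be written as $g = \varphi^{*}(\tilde g)$ for some $\tilde g \in \mathcal{O}(Y)$, so $g \cdot \varphi^{*}(m) = \varphi^{*}(\tilde g\, m) \in \varphi^{*}(\mf_{p})$ for every $m \in \mf_{p}$. Therefore it suffices to exhibit one element of $\varphi^{*}(\mf_{p})$ that is invertible, and the natural candidate is $1$.

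Next I would use the hypothesis $p \in Y \setminus X$ together with the closedness of $X$. Since $X$ is a closed subvariety of the affine variety $Y$, we have $X = V(I(X))$, and $p \notin X$ forces $I(X) \not\subseteq \mf_{p}$ (otherwise $p$ would lie in $V(I(X)) = X$). Hence there exists a function $f \in I(X)$ with $f(p) = c \neq 0$. Because $I(X) = \ker \varphi^{*}$, this $f$ satisfies $\varphi^{*}(f) = 0$, while separating $p$ from $X$.

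Finally I would assemble the element. Set $h = 1 - c^{-1}f \in \mathcal{O}(Y)$. Then $h(p) = 1 - c^{-1}c = 0$, so $h \in \mf_{p}$, while $\varphi^{*}(h) = \varphi^{*}(1) - c^{-1}\varphi^{*}(f) = 1 - 0 = 1$. Thus $1 \in \varphi^{*}(\mf_{p})$, and combined with the fact that $\varphi^{*}(\mf_{p})$ is an ideal this gives $\varphi^{*}(\mf_{p}) = \mathcal{O}(X)$. The argument is short and I do not anticipate a genuine obstacle; the only points requiring care are the two structural facts being used correctly, namely that the image of an ideal under a \emph{surjective} homomorphism is an ideal (so surjectivity of the closed immersion is essential) and that closedness of $X$ guarantees the separating function $f \in I(X)$ with $f(p) \neq 0$.
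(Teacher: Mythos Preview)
Your proof is correct and follows essentially the same route as the paper: both arguments pick a function in $I(X)=\ker\varphi^{*}$ not vanishing at $p$ and use it to shift elements into $\mf_{p}$ without changing their image under $\varphi^{*}$. The only cosmetic difference is that you first observe $\varphi^{*}(\mf_{p})$ is an ideal and then hit $1$, whereas the paper directly shows every $\varphi^{*}(f)$ lies in $\varphi^{*}(\mf_{p})$ via $f-\frac{f(p)}{g(p)}g$.
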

\begin{proof}
    Note that $\mf_{p}$ consists of functions vanishing at the point $p$ so $\varphi^{*}(\mf_{p}) = \{f \circ \varphi: f(p) = 0\}$. We can find a function $g \in \mathcal{O}(Y)$ such that $g{\big|}_X = 0$ but $g(p) \neq 0$. If $f \in \mathcal{O}(Y)$ then $f{\big|}_X = (f - \frac{f(p)}{g(p)}g){\big|}_X$ which is equivalent to $\varphi^{*}(f) = \varphi^{*}(f - \frac{f(p)}{g(p)}g)$. Since $\varphi^{*}$ is surjective and $f - \frac{f(p)}{g(p)}g \in \mf_{p}$ it implies that $\varphi^{*}(\mf_{p}) = \mathcal{O}(X)$.
    
\end{proof}

It turns out that there is a simple algebraic criterion for an affine ind-variety to be strict.

\begin{proposition}\label{strict}
    The following statements are equivalent:
    
    \begin{itemize}
        \item[a) ] $X$ is a strict affine ind-variety.
        \item[b) ] There is a surjective homomorphism $\mathcal{O}(X) \to \BK^{\infty}$.
    \end{itemize}
\end{proposition}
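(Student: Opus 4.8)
The plan is to prove the two implications separately: (a) $\Rightarrow$ (b) by an explicit point-evaluation construction, and (b) $\Rightarrow$ (a) by a dimension count. For (a) $\Rightarrow$ (b), I first note that strictness lets me assume every inclusion in the filtration is proper: if only finitely many of the $X_k \subseteq X_{k+1}$ were proper the filtration would stabilize and $X$ would be a single affine variety, so infinitely many are proper, and replacing the filtration by the cofinal subsequence of proper jumps (which leaves $\varprojlim \mathcal O(X_k) = \mathcal O(X)$ unchanged) I may assume $X_{k-1} \subsetneq X_k$ for all $k$. Choosing a point $p_k \in X_k \setminus X_{k-1}$ for each $k$, I define
\[
\Phi : \mathcal O(X) \to \BK^\infty, \qquad \Phi\big((f_k)_k\big) = \big(f_k(p_k)\big)_k .
\]
Since the system $(f_k)$ is compatible the $n$-th coordinate equals $f_N(p_n)$ for every $N \ge n$, so $\Phi$ is well defined, and it is a $\BK$-algebra homomorphism because multiplication in both rings is coordinate-wise.

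The heart of the argument is the surjectivity of $\Phi$, which I establish by producing, for an arbitrary $(a_k) \in \BK^\infty$, a compatible system $(f_k)$ with $f_k(p_k) = a_k$, constructed by induction on $k$. Given $f_{k-1}$, Lemma~\ref{points} applied to the closed immersion $X_{k-1} \hookrightarrow X_k$ and the point $p_k \in X_k \setminus X_{k-1}$ yields $u \in \mathcal O(X_k)$ with $u(p_k) = 0$ and $u|_{X_{k-1}} = f_{k-1}$; separately, as $X_{k-1}$ is closed and misses $p_k$, there is $g \in \mathcal O(X_k)$ vanishing on $X_{k-1}$ with $g(p_k) \neq 0$. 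Then $f_k = u + \frac{a_k}{g(p_k)}\,g$ satisfies $f_k|_{X_{k-1}} = f_{k-1}$ and $f_k(p_k) = a_k$, and transitivity of restriction upgrades this to full compatibility $f_k|_{X_m} = f_m$ for all $m \le k$. This exhibits a preimage of $(a_k)$, hence the surjectivity.

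For (b) $\Rightarrow$ (a) I argue by contraposition. If $X$ is not strict the filtration stabilizes, so $\mathcal O(X) = \mathcal O(X_N)$ is a finitely generated $\BK$-algebra and thus has at most countable dimension over $\BK$; as any algebra homomorphism is $\BK$-linear and the image of a countable-dimensional space stays countable-dimensional, it suffices to know that $\dim_\BK \BK^\infty$ is uncountable to rule out a surjection onto $\BK^\infty$. Establishing this last fact for every admissible $\BK$ is the main obstacle: for uncountable $\BK$ the vectors $(1,\lambda,\lambda^2,\dots)$ with $\lambda \in \BK$ are linearly independent by a Vandermonde argument and give $\dim_\BK \BK^\infty \ge |\BK| > \aleph_0$, while for countable $\BK$ a finitely generated algebra is countable and cannot surject onto the uncountable set $\BK^\infty$; uniformly, the Erd\H{o}s--Kaplansky theorem gives $\dim_\BK \BK^\infty = |\BK|^{\aleph_0} > \aleph_0$.
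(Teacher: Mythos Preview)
Your proof is correct. For (a) $\Rightarrow$ (b) you follow essentially the same strategy as the paper: pick points $p_k \in X_k \setminus X_{k-1}$ and evaluate. The only difference is that the paper packages the surjectivity step abstractly---it writes the short exact sequence of inverse systems $0 \to \prod_{i\le k}\mf_{p_i} \to \mathcal O(X_k) \to \mathcal O(\{p_1,\dots,p_k\}) \to 0$, checks via Lemma~\ref{points} that the left-hand system has surjective transition maps, and then invokes the Mittag--Leffler exactness of $\varprojlim$---whereas you unroll this into an explicit inductive lift. Your version is more hands-on and self-contained; the paper's is shorter once one is willing to cite the homological fact.

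For (b) $\Rightarrow$ (a) the arguments genuinely diverge. The paper observes that a non-strict $X$ has Noetherian $\mathcal O(X)$, that $\BK^\infty$ is not Noetherian (the ideal of finitely supported sequences is not finitely generated), and that Noetherianity passes to quotients. This is a one-line obstruction. Your dimension count is also valid but heavier: it forces a case split on $|\BK|$ (or an appeal to Erd\H{o}s--Kaplansky), since over uncountable $\BK$ raw cardinality does not separate $\mathcal O(X_N)$ from $\BK^\infty$. The Noetherian argument avoids this entirely and is worth knowing here, especially since the paper reuses it in Theorem~\ref{notcount}.
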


\begin{proof}
    a) $\Rightarrow$ b). We can represent $X$ by filtration $X = \bigcup X_i$, where all $X_i$ are affine algebraic varieties and the sequence \{$X_i$\} does not stabilize. Let $j_i: X_i \to X_{i+1}$ denote a closed immersion of the elements of this filtration. We can assume that $X_i \neq X_{i+1}$ so the inclusions are strict. Since the inclusions are strict we can find a set of points \{$p_i$\} such that $p_i \in X_i \setminus X_{i-1}$ so we have the following diagram:
    
    \[ \begin{tikzcd}[sep=large]
    \{p_1\} \arrow[hook]{r}{i_1} \arrow[hook]{d} & \{p_1, p_2\} \arrow[hook]{r}{i_2} \arrow[hook]{d} & \{p_1, p_2, p_3\} \arrow[hook]{r}{i_3} \arrow[hook]{d} & ...\\%
    X_1 \arrow[hook]{r}{j_1} & X_2 \arrow[hook]{r}{j_2} & X_3 \arrow[hook]{r}{j_3} & ...
    \end{tikzcd}
    \]
    
    Since the ideal of the union of distinct points is defined by the product of its ideals, this diagram yields the short exact sequence of projective systems:
    
    \[ \begin{tikzcd}[sep=large]
    0 & 0 & 0\\%
    \mathcal{O}(\{p_1\}) \arrow{u} & \mathcal{O}(\{p_1, p_2\}) \arrow[two heads]{l}{i_1^{*}} \arrow{u} & \mathcal{O}(\{p_1, p_2, p_3\}) \arrow[two heads]{l}{i_2^{*}} \arrow{u} & \arrow[two heads]{l}{i_3^{*}} ...\\%
    \mathcal{O}(X_1) \arrow[two heads]{u} & \mathcal{O}(X_2) \arrow[two heads]{u} \arrow[two heads]{l}{j_1^{*}} & \mathcal{O}(X_3) \arrow[two heads]{u} \arrow[two heads]{l}{j_2^{*}} & \arrow[two heads]{l}{j_3^{*}} ...\\%
    \mf_{p_1} \arrow[hook]{u} & \mf_{p_1}\mf_{p_2} \arrow[hook]{u} \arrow[two heads]{l}{j_1^{*}{\big|}_{\mf_{p_1}\mf_{p_2}}} & \mf_{p_1}\mf_{p_2}\mf_{p_3} \arrow[two heads]{l}{j_2^{*}{\big|}_{\mf_{p_1}\mf_{p_2}\mf_{p_3}}} \arrow[hook]{u} & \arrow[two heads]{l}{j_3^{*}{\big|}_{...}} ...\\%
    0 \arrow{u} & 0 \arrow{u} & 0 \arrow{u}
    \end{tikzcd}
    \]
    
    Note that the bottom projective system is correctly defined and it is surjective as for any $1 \leq i \leq k$ we have $j_k^{*}(\mf_{p_i}) = \mf_{p_i}$ and $j_k^{*}(\mf_{p_{k+1}}) = \mathcal{O}(X_k)$ by Lemma~\ref{points}.

    Since the bottom system of ideals is surjective, the limit functor preserves exactness (see \cite[Proposition 10.2]{AM} or \cite[Proposition 3.5.7]{W}) and we get the following short exact sequence:
    
    \[ \begin{tikzcd}[sep=normal]
    0 \arrow{r} & \varprojlim \mf_{p_1}...\mf_{p_n} \arrow{r} & \varprojlim \mathcal{O}(X_n) \arrow{r} \arrow[equal]{d} & \varprojlim \mathcal{O}(\{p_1, ..., p_n\}) \arrow{r} \arrow[equal]{d} & 0\\%
    & & \mathcal{O}(X) \arrow{r} & \mathcal{O}(\bigcup_{k \in \mathbb{N}} p_k) \arrow{r} & 0
    \end{tikzcd}
    \]
    
    Since $\mathcal{O}(\bigcup_{\BK \in \mathbb{N}} p_k) = \BK^{\infty}$, we get the required statement.
    \vspace{2mm}

    b) $\Rightarrow$ a). If $X$ is a common algebraic variety over $\BK$, then $\CO(X)$ is a noetherian ring. The algebra $\BK^{\infty}$ is not noetherian because the ideal of elements with finitely many non-zero components is not finitely generated. Since $\BK^{\infty}$ is not noetherian, it cannot be a surjective image of $\CO(X)$.

\end{proof}

The Proposition~\ref{strict} provides us with the main result of this section.

\begin{theorem}\label{notcount}
    Let X be a strict ind-variety. Then $\mathcal{O}(X)$ is non-noetherian and does not have a countable set of generators over $\BK$.
\end{theorem}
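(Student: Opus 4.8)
The plan is to derive both assertions from the surjection $\mathcal{O}(X) \twoheadrightarrow \BK^{\infty}$ supplied by Proposition~\ref{strict}, reducing each property of $\mathcal{O}(X)$ to the corresponding property of $\BK^{\infty}$, which a quotient must inherit. So throughout I fix a surjective $\BK$-algebra homomorphism $\phi \colon \mathcal{O}(X) \to \BK^{\infty}$.

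For non-noetherianity, I would first record that $\BK^{\infty}$ is non-noetherian: the standard idempotents $e_n = (0,\dots,0,1,0,\dots)$ give an ascending chain of ideals $\BK e_1 \subsetneq \BK e_1 + \BK e_2 \subsetneq \cdots$ that does not stabilize (equivalently, the ideal of finite-support sequences is not finitely generated, as already noted in the proof of Proposition~\ref{strict}). Since a quotient of a noetherian ring is noetherian, a noetherian $\mathcal{O}(X)$ would force $\BK^{\infty}$ to be noetherian; hence $\mathcal{O}(X)$ is non-noetherian. By Hilbert's basis theorem this already shows $\mathcal{O}(X)$ is not finitely generated, but we want the stronger countable statement.

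For the countable generators, the key reduction is that if $\mathcal{O}(X)$ were generated as a $\BK$-algebra by a countable set $f_1, f_2, \dots$, then $\BK^{\infty} = \phi(\mathcal{O}(X)) = \BK[\phi(f_1), \phi(f_2), \dots]$ would be countably generated as well; so it suffices to prove $\BK^{\infty}$ is \emph{not} countably generated as a $\BK$-algebra. For this I would use a dimension count: a $\BK$-algebra generated by countably many elements is spanned, as a $\BK$-vector space, by the monomials in those generators, and there are only countably many such monomials, so the algebra has at most countable $\BK$-dimension. It therefore remains to show that $\dim_{\BK} \BK^{\infty}$ is uncountable.

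The main (indeed the only real) obstacle is establishing $\dim_{\BK} \BK^{\infty} > \aleph_0$ uniformly in $\BK$ --- in particular for countable fields such as $\overline{\BQ}$, where one cannot simply compare the cardinality of a countable algebra with that of an uncountable field. I would do this with an almost disjoint family: fix a family $\{A_{\alpha}\}_{\alpha}$ of infinite subsets of $\BN$ with pairwise finite intersections and $|\{A_{\alpha}\}| = 2^{\aleph_0}$ (such families are standard), and consider the indicator vectors $\chi_{A_{\alpha}} \in \BK^{\infty}$. Any nontrivial finite relation $\sum_{i} c_i \chi_{A_{\alpha_i}} = 0$ is refuted by evaluating at a coordinate lying in $A_{\alpha_1}$ but in none of the other finitely many $A_{\alpha_i}$, which exists by almost-disjointness; that coordinate reads off $c_1 = 0$, and symmetrically every $c_i = 0$. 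Hence $\{\chi_{A_{\alpha}}\}$ is a $\BK$-linearly independent set of cardinality $2^{\aleph_0}$, so $\dim_{\BK} \BK^{\infty} \ge 2^{\aleph_0} > \aleph_0$. This contradicts the countable dimension forced by countable generation, completing the argument.
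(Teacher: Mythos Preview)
Your argument is correct and follows exactly the paper's approach: reduce via the surjection of Proposition~\ref{strict} to the corresponding properties of $\BK^{\infty}$, using that noetherianity and countable generation pass to quotients. The paper's proof simply asserts that ``it is easy to see that $\BK^{\infty}$ has the required properties,'' whereas you supply the missing justification, in particular the almost-disjoint-family argument showing $\dim_{\BK}\BK^{\infty}$ is uncountable even when $\BK$ is countable (e.g.\ $\overline{\BQ}$).
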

\begin{proof}
    It is easy to see that $\BK^{\infty}$ has the required properties. Since $\BK^{\infty}$ is a surjective image of $\mathcal{O}(X)$ for any $X$ by Proposition~\ref{strict}, it implies that $\mathcal{O}(X)$ is also non-noetherian and does not have a countable set of generators over $\BK$.
    
\end{proof}

\section{Local bases in affine ind-algebras}

Although the coordinate algebra of the affine ind-variety does not have a countable set of generators we can investigate its local structure. Let us introduce the following definition.

\begin{definition}\label{lb}
    Let $X$ be an affine ind-variety. We call a system $B \subseteq \mathcal{O}(X)$ a \emph{local basis} of the topological algebra $\mathcal{O}(X)$ if for any non-empty open subset $U \subseteq X$ there is an element $f \in U$ which is contained in the linear span of $B$.
\end{definition}

\begin{remark}
    The existence of a local basis is equivalent to the existence of an everywhere dense subspace of countable dimension.
\end{remark}

The main goal of this section is to prove the following theorem.

\begin{theorem}\label{countbasis}
    Let $X$ be an affine ind-variety. Then there is a countable local basis for the topological algebra $\mathcal{O}(X)$.
\end{theorem}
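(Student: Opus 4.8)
The plan is to build the countable set $B$ directly from compatible lifts of vector-space bases on the finite levels, using that every $\mathcal{O}(X_m)$ has countable $\BK$-dimension. The first step is to record how density is detected in the inverse-limit topology. Write $p_m : \mathcal{O}(X) = \varprojlim \mathcal{O}(X_m) \to \mathcal{O}(X_m)$ for the canonical projection. A basic nonempty open neighbourhood of a point $f \in \mathcal{O}(X)$ is the preimage $p_m^{-1}(p_m(f))$, so a subset $S \subseteq \mathcal{O}(X)$ is dense if and only if $p_m(S) = p_m(\mathcal{O}(X))$ for every $m$. Because the transition maps $\pi_{km}$ are surjective, each $p_m$ is itself surjective (lift any element of $\mathcal{O}(X_m)$ step by step up the tower), so $p_m(\mathcal{O}(X)) = \mathcal{O}(X_m)$. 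Hence it suffices to find a countable $B$ whose linear span $\langle B \rangle$ satisfies $p_m(\langle B \rangle) = \mathcal{O}(X_m)$ for all $m$; since $p_m$ is $\BK$-linear, $p_m(\langle B \rangle) = \langle p_m(B) \rangle$, so this amounts to requiring that $p_m(B)$ span $\mathcal{O}(X_m)$ over $\BK$.

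The second step is to exploit that $\mathcal{O}(X_m)$, being the coordinate ring of an affine variety, is a finitely generated $\BK$-algebra and therefore has a countable $\BK$-basis (for instance the images of monomials in a fixed finite generating set). Fix such a basis $\{b^m_i\}_{i \geq 1}$ of $\mathcal{O}(X_m)$ for each $m$. Using the surjectivity of $p_m$, choose for every pair $(m,i)$ an element $\beta^m_i \in \mathcal{O}(X)$ with $p_m(\beta^m_i) = b^m_i$, and set $B = \{\beta^m_i : m \geq 1,\ i \geq 1\}$. This is a countable union of countable sets, hence countable.

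The third step is to verify the spanning condition level by level. Fix $m$. Then $p_m(B) \supseteq \{p_m(\beta^m_i) : i \geq 1\} = \{b^m_i : i \geq 1\}$, a basis of $\mathcal{O}(X_m)$, so $p_m(B)$ spans $\mathcal{O}(X_m)$. By the reduction of the first step this makes $\langle B \rangle$ dense, and therefore $B$ is the desired countable local basis; equivalently, $\langle B \rangle$ is an everywhere dense subspace of countable $\BK$-dimension, matching the remark.

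The point that needs the most care is the reduction in the first step: pinning down precisely which subspaces are dense for the inverse-limit topology, and confirming that each projection $p_m$ is onto. The remaining arrangement is deliberately chosen so that spanning, once secured on level $m$ by the lifts $\beta^m_i$, cannot be destroyed by the lifts added for other levels, since enlarging $B$ only enlarges each image $p_m(B)$. This monotonicity is exactly what allows a single global countable set to satisfy the requirement on all levels simultaneously, so no inductive bookkeeping over the kernels of the transition maps is required.
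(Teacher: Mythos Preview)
Your argument is correct, but it follows a genuinely different route from the paper. The paper first establishes a continuous surjection $\pi:\mathcal{O}(\BA^{\infty})\to\mathcal{O}(X)$ by embedding the filtration of $X$ into a chain of affine spaces (Lemma~\ref{immersions} and Lemma~\ref{spaceSur}), then exhibits an explicit countable local basis of $\mathcal{O}(\BA^{\infty})$ consisting of monomials, using the canonical sections $\BK[x_1,\dots,x_n]\hookrightarrow\BK^{[\infty]}$ (Proposition~\ref{space}); the theorem follows by pushing this basis forward through $\pi$. You instead work intrinsically with the inverse system of $\mathcal{O}(X_m)$'s: you characterise density via the projections $p_m$, observe that each $\mathcal{O}(X_m)$ has countable $\BK$-dimension, and lift bases level by level using only the surjectivity of $p_m$. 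Your approach is more elementary and self-contained, avoiding the embedding machinery altogether; the paper's approach, on the other hand, yields a concrete description of the local basis (images of monomials) and records the independently useful fact that every $\mathcal{O}(X)$ is a continuous quotient of $\mathcal{O}(\BA^{\infty})$. In essence, where the paper exploits the canonical sections available for $\BA^{\infty}$ and then transports the result, you replace those sections by arbitrary lifts and run the same density check directly on $X$.
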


\begin{lemma}\cite[Lemma 1.5.2]{FK}\label{immersions}
    Let $\iota: X_1 \to X_2$ and $j_1: X_1 \to \BA^{n_1}$ be closed immersions of affine algebraic varieties. Then there are $n_2 \geq n_1$ and closed immersions $j_2: X_2 \to \BA^{n_2}$ and $i:\BA^{n_1} \to \BA^{n_2}$ such that $i \circ j_1 = j_2 \circ \iota$.
\end{lemma}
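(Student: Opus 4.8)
The plan is to pass to coordinate algebras, where a closed immersion corresponds exactly to a surjection of finitely generated $\BK$-algebras, and to build the required maps by lifting a generating set and then correcting it so that the commutativity becomes automatic. Writing $\iota^*\colon \CO(X_2)\twoheadrightarrow\CO(X_1)$ and $j_1^*\colon\BK[x_1,\dots,x_{n_1}]\twoheadrightarrow\CO(X_1)$ for the surjections dual to $\iota$ and $j_1$, I set $y_k:=j_1^*(x_k)$, so that $y_1,\dots,y_{n_1}$ generate $\CO(X_1)$.

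First I would lift these generators through $\iota^*$: since $\iota^*$ is surjective, choose $\tilde y_k\in\CO(X_2)$ with $\iota^*(\tilde y_k)=y_k$ for $1\le k\le n_1$. These lifts need not generate $\CO(X_2)$, so fix a finite generating set $g_1,\dots,g_m$ of the finitely generated algebra $\CO(X_2)$. Because the $y_k$ generate $\CO(X_1)$, each image $\iota^*(g_l)$ can be written as a polynomial $\iota^*(g_l)=P_l(y_1,\dots,y_{n_1})$ in them. The decisive step is to replace $g_l$ by the corrected element $h_l:=g_l-P_l(\tilde y_1,\dots,\tilde y_{n_1})$; applying $\iota^*$ gives $\iota^*(h_l)=P_l(y_1,\dots,y_{n_1})-P_l(y_1,\dots,y_{n_1})=0$, so $h_l\in\ker\iota^*$, i.e. $h_l$ vanishes on $X_1$.

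Setting $n_2:=n_1+m$, I would define $j_2\colon X_2\to\BA^{n_2}$ by $j_2^*(x_k)=\tilde y_k$ for $k\le n_1$ and $j_2^*(x_{n_1+l})=h_l$ for $1\le l\le m$, and define $i\colon\BA^{n_1}\to\BA^{n_2}$ to be the standard coordinate inclusion, whose comorphism is $i^*(x_k)=x_k$ for $k\le n_1$ and $i^*(x_k)=0$ for $k>n_1$. The map $j_2$ is a closed immersion because its comorphism is surjective: the identity $g_l=P_l(\tilde y_1,\dots,\tilde y_{n_1})+h_l$ shows that the generators $g_l$ of $\CO(X_2)$ already lie in the subalgebra generated by the $\tilde y_k$ and $h_l$, which is the image of $j_2^*$.

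Finally I would verify $i\circ j_1=j_2\circ\iota$ by checking the equality of comorphisms $j_1^*\circ i^*=\iota^*\circ j_2^*$ on each coordinate $x_k$. For $k\le n_1$ both sides equal $y_k$ (using $\iota^*(\tilde y_k)=y_k$ on the right and $i^*(x_k)=x_k$ on the left), while for $k>n_1$ the left side is $j_1^*(0)=0$ and the right side is $\iota^*(h_l)=0$ by the correction. The only genuine obstacle is exactly this vanishing of the extra coordinates on $X_1$: the naive choice of adding arbitrary generators of $\CO(X_2)$ would fail the commutativity, and the correction $h_l=g_l-P_l(\tilde y)$ is precisely what forces the square to commute. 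Everything else is the routine dictionary between closed immersions and surjections of coordinate algebras, so the care in the argument lies entirely in the bookkeeping of that correction step.
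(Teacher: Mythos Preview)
Your argument is correct: lifting the generators $y_k$ through $\iota^*$, then correcting the extra generators via $h_l=g_l-P_l(\tilde y_1,\dots,\tilde y_{n_1})$ so that they vanish on $X_1$, is precisely what is needed to force the square to commute while keeping $j_2^*$ surjective, and the remaining checks are the standard dictionary between closed immersions and surjections of coordinate rings. The paper itself does not prove this lemma at all---it simply quotes it as \cite[Lemma~1.5.2]{FK}---so there is no in-paper argument to compare your proof against.
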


We shall denote the ind-variety $\cup_{n\in \BN} \BA^n$ by $\BA^{\infty}$. 

\begin{lemma}\label{spaceSur}
    Let $X$ be an affine ind-variety. Then there is a continuous surjective homomorphism $\CO(\BA^{\infty}) \to \CO(X)$.
\end{lemma}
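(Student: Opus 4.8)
The plan is to realize $X$ as a closed ind-subvariety of $\BA^{\infty}$ and then dualize. Fix a filtration $X=\bigcup_k X_k$ by affine varieties with closed immersions $\iota_k\colon X_k\hookrightarrow X_{k+1}$, and choose any closed immersion $j_1\colon X_1\hookrightarrow\BA^{n_1}$. Applying Lemma~\ref{immersions} inductively to $\iota_k$ and $j_k$ produces integers $n_{k+1}\ge n_k$, closed immersions $j_{k+1}\colon X_{k+1}\hookrightarrow\BA^{n_{k+1}}$, and embeddings $i_k\colon\BA^{n_k}\hookrightarrow\BA^{n_{k+1}}$ with $i_k\circ j_k=j_{k+1}\circ\iota_k$. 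The proof of that lemma lets us take each $i_k$ to be the standard coordinate embedding $(x_1,\dots,x_{n_k})\mapsto(x_1,\dots,x_{n_k},0,\dots,0)$, and, by adjoining a set of generators of the ideal of $X_k$ in $X_{k+1}$ to the extra coordinates, to arrange in addition the scheme-theoretic equality $X_k=X_{k+1}\cap\BA^{n_k}$. Since $(n_k)$ is increasing, the chain $\BA^{n_1}\subseteq\BA^{n_2}\subseteq\cdots$ is cofinal in $\BA^{1}\subseteq\BA^{2}\subseteq\cdots$, so it defines the ind-variety $\BA^{\infty}$, and the $j_k$ assemble into a closed immersion of ind-varieties $\jmath\colon X\to\BA^{\infty}$.

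Next I would dualize. Each $j_k$ is a closed immersion, so $j_k^{*}\colon\CO(\BA^{n_k})\to\CO(X_k)$ is surjective, and the relation $i_k\circ j_k=j_{k+1}\circ\iota_k$ says exactly that the $j_k^{*}$ commute with the transition maps $r_k\colon\CO(\BA^{n_{k+1}})\to\CO(\BA^{n_k})$ and $\iota_k^{*}\colon\CO(X_{k+1})\to\CO(X_k)$. Hence they form a morphism of inverse systems and induce a homomorphism
\[
\jmath^{*}=\varprojlim j_k^{*}\colon\CO(\BA^{\infty})=\varprojlim\CO(\BA^{n_k})\longrightarrow\varprojlim\CO(X_k)=\CO(X),
\]
which is continuous because it is an inverse limit of maps, just as explained in the introduction. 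The first identification uses that the inverse limit over the cofinal subsystem $(n_k)$ agrees with the limit over the full system defining $\BA^{\infty}$.

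It remains to prove surjectivity, which is the main point. Writing $\CI_k=\ker j_k^{*}$ for the ideal of $X_k$ in $\CO(\BA^{n_k})$, we obtain a short exact sequence of inverse systems
\[
0\longrightarrow\CI_k\longrightarrow\CO(\BA^{n_k})\longrightarrow\CO(X_k)\longrightarrow0 .
\]
The transition map $r_k$ is surjective, being restriction along a closed immersion, and the scheme-theoretic equality $X_k=X_{k+1}\cap\BA^{n_k}$ arranged above means precisely that $r_k(\CI_{k+1})=\CI_k$, so the transition maps of the kernel system are surjective as well. That system therefore satisfies the Mittag--Leffler condition, the inverse limit functor keeps the sequence exact (\cite[Proposition 10.2]{AM} or \cite[Proposition 3.5.7]{W}), exactly as in the proof of Proposition~\ref{strict}, and hence $\jmath^{*}$ is surjective.

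I expect the only genuine obstacle to be this surjectivity step, and more precisely the need to choose the embeddings so that $X_k=X_{k+1}\cap\BA^{n_k}$ holds scheme-theoretically. Without it the transition maps on the kernels need not be surjective, an inverse limit of surjections can fail to be surjective, and $\jmath^{*}$ could miss part of $\CO(X)$. Everything else is a formal consequence of Lemma~\ref{immersions} together with the exactness of $\varprojlim$ on systems with surjective transition maps.
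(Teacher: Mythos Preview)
Your proof is correct and follows the same strategy as the paper: embed $X$ in $\BA^{\infty}$ via Lemma~\ref{immersions}, pass to the short exact sequence of inverse systems of coordinate rings and ideals, and invoke exactness of $\varprojlim$ on surjective systems (\cite[Proposition 10.2]{AM}). You are in fact more careful than the paper at the one delicate point: the paper simply asserts that the transition maps $i_k^{*}|_{\CI(X_{k+1})}\colon\CI(X_{k+1})\to\CI(X_k)$ are surjective (its stated argument only shows they land in $\CI(X_k)$), whereas you secure this by arranging the embeddings so that $X_k=X_{k+1}\cap\BA^{n_k}$ scheme-theoretically.
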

\begin{proof}
    The proof is very similar to the proof of the Proposition~\ref{strict}. We can assume that $X$ has a strictly increasing filtration by affine algebraic varieties $X_k$, $k \geq 1$. Using the Lemma~\ref{immersions} we get the following commutative diagram of inclusions:

    \[ \begin{tikzcd}[sep=large]
    X_1 \arrow[hook]{r}{\iota_1} \arrow[hook]{d} & X_2 \arrow[hook]{r}{\iota_2} \arrow[hook]{d} & X_3 \arrow[hook]{r}{\iota_3} \arrow[hook]{d} & ...\\%
    \BA^{n_1} \arrow[hook]{r}{i_1} & \BA^{n_2} \arrow[hook]{r}{i_2} & \BA^{n_3} \arrow[hook]{r}{i_3} & ...
    \end{tikzcd}
    \]

    which yields the dual short exact sequence of projective systems:

    \[ \begin{tikzcd}[sep=large]
    0 & 0 & 0\\%
    \mathcal{O}(X_1) \arrow{u} & \mathcal{O}(X_2) \arrow[two heads]{l}{\iota_{1}^{*}} \arrow{u} & \mathcal{O}(X_3) \arrow[two heads]{l}{\iota_{2}^{*}} \arrow{u} & \arrow[two heads]{l}{\iota_{3}^{*}} ...\\%
    \mathcal{O}(\BA^{n_1}) \arrow[two heads]{u} & \mathcal{O}(\BA^{n_2}) \arrow[two heads]{u} \arrow[two heads]{l}{i_1^{*}} & \mathcal{O}(\BA^{n_3}) \arrow[two heads]{u} \arrow[two heads]{l}{i_2^{*}} & \arrow[two heads]{l}{i_3^{*}} ...\\%
    \CI(X_1) \arrow[hook]{u} & \CI(X_2) \arrow[hook]{u} \arrow[two heads]{l}{i_1^{*}{\big|}_{\CI(X_2)}} & \CI(X_3) \arrow[two heads]{l}{i_2^{*}{\big|}_{\CI(X_3)}} \arrow[hook]{u} & \arrow[two heads]{l}{i_3^{*}{\big|}_{\CI(X_4)}} ...\\%
    0 \arrow{u} & 0 \arrow{u} & 0 \arrow{u}
    \end{tikzcd}
    \]
    \vspace{2mm}

    where $\CI(X_i)$ denotes an ideal of $X_i$ in $\BA^{n_i}$.
    
    Since $\CI(X_{k+1})$ consists of functions vanishing along $X_{k+1}$, their restriction to any subset vanishes along $X_{k} \subset X_{k+1}$ but $i_k^{*}$ is exactly a restriction. It means that the bottom projective system is correctly defined and it is surjective. Just as in the proof of Proposition~\ref{strict} it implies that we have a continuous surjective homomorphism $\varprojlim \CO(\BA^{n_k}) \to \varprojlim\CO(X_k) = \CO(X)$. By the \cite[Proposition 1.4.8]{FK} $\BA^{\infty}$ does not depend on the filtration by affine spaces and then $\varprojlim \CO(\BA^{n_k}) \simeq \CO(\BA^{\infty})$ in our case. It completes the proof.
    
\end{proof}

\begin{proposition}\label{space}
    The algebra $\CO(\BA^{\infty})$ has a countable local basis.
\end{proposition}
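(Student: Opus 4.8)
The plan is to realise $\CO(\BA^\infty)$ explicitly as an inverse limit and to reduce the density requirement to a statement that is transparently satisfied by monomials. Fixing the standard filtration $\BA^\infty=\cup_n\BA^n$ with the linear embeddings $\BA^n=\{x_{n+1}=\dots=0\}\hookrightarrow\BA^{n+1}$, the dual transition maps $\CO(\BA^{n+1})=\BK[x_1,\dots,x_{n+1}]\to\BK[x_1,\dots,x_n]=\CO(\BA^n)$ are the substitutions $x_{n+1}\mapsto 0$, so $\CO(\BA^\infty)=\varprojlim\BK[x_1,\dots,x_n]$ is the algebra of compatible sequences $(f_n)$ with $f_{n+1}(x_1,\dots,x_n,0)=f_n$. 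First I would record the canonical projections $\pi_N:\CO(\BA^\infty)\to\BK[x_1,\dots,x_N]$, $(f_n)\mapsto f_N$, which are surjective and whose fibres form a neighbourhood basis of the inverse-limit topology: because of the compatibility relations, fixing the $N$-th component of a sequence determines all earlier components, so the sets $\pi_N^{-1}(h)$ (for $N\in\BN$ and $h\in\BK[x_1,\dots,x_N]$) are a basis of open sets.

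The key reduction is then the observation that a linear subspace $V\subseteq\CO(\BA^\infty)$ is everywhere dense if and only if $\pi_N(V)=\BK[x_1,\dots,x_N]$ for every $N$. Indeed, a nonempty basic open set has the form $\pi_N^{-1}(h)$, and $V$ meets it precisely when $h\in\pi_N(V)$; letting $h$ range over all of $\BK[x_1,\dots,x_N]$ gives the stated equivalence. By the Remark following Definition~\ref{lb} it therefore suffices to produce a countable set $B$ whose linear span $V$ projects onto each finite-level polynomial ring.

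The candidate I would take is the set $B$ of all monomials $x^\alpha$ in the variables $x_1,x_2,\dots$, indexed by multi-indices $\alpha$ of finite support. Each such $x^\alpha$ is a genuine element of $\CO(\BA^\infty)$: its component in $\BK[x_1,\dots,x_n]$ equals $x^\alpha$ once $\supp(\alpha)\subseteq\{1,\dots,n\}$ and is $0$ otherwise, and these components are visibly compatible. The set $B$ is countable, since the finitely supported multi-indices over a countable index set form a countable set, so its linear span $V=\BK[x_1,x_2,\dots]$ has countable dimension. Verifying density is immediate from the reduction above: the monomials $x^\alpha$ with $\supp(\alpha)\subseteq\{1,\dots,N\}$ lie in $V$, are fixed by $\pi_N$, and already span $\BK[x_1,\dots,x_N]$; hence $\pi_N(V)=\BK[x_1,\dots,x_N]$ for all $N$, so $V$ is dense and $B$ is the desired countable local basis.

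The construction itself is essentially trivial once the framework is in place, so the only real work — and the step I would treat most carefully — is the topological reduction in the first two paragraphs: correctly identifying the neighbourhood basis of the inverse-limit topology and translating ``dense linear span'' into the level-wise surjectivity $\pi_N(V)=\BK[x_1,\dots,x_N]$. After that the monomials manifestly do the job, and the same argument feeds into Theorem~\ref{countbasis} by pushing $V$ forward along the continuous surjection of Lemma~\ref{spaceSur}.
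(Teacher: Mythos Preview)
Your proof is correct and is essentially the same as the paper's: both take $B$ to be the set of finitely supported monomials, identify the inverse-limit topology via the kernels of the projections $\pi_N$, and check density by observing that $\pi_N$ carries the span of $B$ onto $\BK[x_1,\dots,x_N]$. Your phrasing of density as the level-wise surjectivity condition $\pi_N(V)=\BK[x_1,\dots,x_N]$ is a clean reformulation of exactly the computation the paper does with the cosets $f+I_n$.
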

\begin{proof}
    There is a canonical filtration $\BA^\infty = \cup_{k=1}^\infty \BA^k$ which yields us the canonical representation of $\mathcal{O}(\BA^\infty) = \BK^{[\infty]} = \varprojlim_{n}\BK[x_1, \dotsc, x_n]$ with projections:\begin{gather*}
        \pi_n^m:\BK[x_1,\dotsc,x_m] \to \BK[x_1, \dotsc,x_{n}], \ \pi_n^m(f(x_1, \dotsc,x_m)) = f(x_1,\dotsc,x_{n}, 0, \dotsc, 0), \ m \geq n\\
        \pi_n: \BK^{[\infty]} \to \BK[x_1.\dotsc,x_n], \ \pi_n(f(x_1, \dotsc, x_n, \dotsc)) = f(x_1,\dotsc,x_n,0,0,\dotsc)
    \end{gather*}
    Note that for every $\BK[x_1,\dotsc,x_n]$ there is a natural inclusion $i_n: \BK[x_1,\dotsc,x_n] \to \BK^{[\infty]}$ such that $\pi_n \circ i_n = id$. So we can assume that for every $f \in \BK^{[\infty]}$ an element $\pi_n(f)$ is also contained in $\BK^{[\infty]}$. The algebra $\BK[x_1,\dotsc,x_n]$ has an obvious countable basis \begin{equation*}
        B_n = \{x_1^{l_1}x_2^{l_2}\dotsc x_n^{l_n} \ | \ l_1,\dotsc,l_n \in \BN\cup\{0\}\}. 
    \end{equation*}

    $\BK^{[\infty]}$ has a topology induced by the countable set of ideals $I_n = \Ker \pi_n$ as a base of neighborhoods of 0. Consider the set $B = \cup_{n\in\BN}B_n$ in $\BK^{[\infty]}$. The set $B$ is countable since every $B_n$ is countable. For any $f \in \BK^{[\infty]}$ and for any $n \in \BN$ the image $\pi_n(f)$ is contained in the linear span of $B_n$ in $\BK[x_1,\dotsc,x_n]$. Hence $\pi_n(f)$ is contained in the linear span of $B$. \begin{equation*}
        g \in f + I_n  \ \Leftrightarrow \ \pi_n(f) = \pi_n(g)
    \end{equation*}
    For any neighborhood $U$ of $f \in \BK^{[\infty]}$ there is a number $n \in \BN$ such that $f + I_n \subseteq U$. Let $g = \pi_n(f) \in \BK^{[\infty]}$, it is contained in the linear span of $B$ and it is contained in the $U$ because it is contained in $f + I_n$ since $\pi_n(g) = g = \pi_n(f)$. It proves that the set $B$ is a local basis for $\mathcal{O}(\BA^{\infty)}$.

\end{proof}

Now we can prove the Theorem \ref{countbasis}.

\begin{proof}[Proof of the Theorem \ref{countbasis}]
    We use notations from the Proposition~\ref{space}. By Lemma~\ref{spaceSur} we have a continuous surjective homomorphism $\pi: \CO(\BA^{\infty}) \to \CO(X)$. It implies that $\pi(B)$ is a local basis for $\CO(X)$. Indeed, if an open subset $U \subset \CO(X)$ is non-empty, then its preimage in $\CO(\BA^{\infty})$ is non-empty due to surjectivity of $\pi$ and thus contains an element $f$ from the linear span of $B$. But it means that $\pi(f) \in U$ and $f$ is contained in the linear span of $\pi(B)$.

\end{proof}

\section*{Acknowledgments}

The author of this work is a scholarship holder of the Foundation for the Development of Theoretical Physics and Mathematics “BASIS”.

\end{document}